\numberwithin{equation}{section}
\theoremstyle{plain}
\newtheorem{thm}{Theorem}[section]
\newtheorem{prop}[thm]{Proposition}
\newtheorem{lem}[thm]{Lemma}
\theoremstyle{definition}
\newtheorem*{bprob*}{Bonus problem}
\def\N{\mathbb{N}}
\def\e{\epsilon}
\def\a{\alpha}
\newcommand{\ci}{\mathbf{i}}
\DeclareMathOperator{\diam}{diam}
\newcommand{\cS}{\mathcal{S}}
\newcommand{\Hold}{\mathop\mathrm{H\ddot{o}ld}\nolimits}
\author{Efstathios-K. Chrontsios-Garitsis}
\author{Vyron Vellis}
\subjclass[2020]{Primary 26A16; Secondary 28A80}
\keywords{Spiral, H\"older arc, H\"older exponents}
\thanks{V.V was partially supported by NSF DMS grant 2154918.}
\address{Department of Mathematics\\ The University of Tennessee\\ Knoxville, TN 37966}
\email{echronts@utk.edu}
\address{Department of Mathematics\\ The University of Tennessee\\ Knoxville, TN 37966}
\email{vvellis@utk.edu}
\begin{document}

\title{H\"older spiral arcs}

\begin{abstract}
We establish a quantitative necessary and sufficient condition for a spiral arc to be a H\"older arc. The class of spiral arcs contains spirals studied  by Fraser in \cite{Fra_spirals}, and by Burell-Falconer-Fraser in \cite{BFF}. As an application, we recover the sharp result on the H\"older winding problem for polynomial spirals, initially proved in \cite{Fra_spirals}. Moreover,  we provide a sharp exponent estimate for the H\"older classification of polynomial spirals, which coincides with the corresponding quasiconformal classification estimate, and improve certain exponent bounds on the H\"older classification of elliptical spirals  from \cite{BFF}.
\end{abstract}

\maketitle

\section{Introduction}

Given a continuous function $\phi :[2\pi,+\infty) \to (0,\infty)$ with $\lim_{t\rightarrow \infty} \phi(t)= 0$, we denote by $\mathcal{S}_{\phi}$ the spiral 
\[ \{\phi(t) e^{\ci t}:t\in [2\pi,+\infty)\}\cup\{(0,0)\}.\]
Spirals hold a prominent role in fluid turbulence \cite{foi, vass, vasshunt}, dynamical systems \cite{zub, spirals_ode}, and even certain types of models in mathematical biology \cite{Tyson_spirals, Murray_book}. Moreover, they provide examples of ``non-intuitive" fractal behavior (see \cite{dup}), while they have also been extensively studied due to their unexpected analytic properties. For instance, Katznelson-Nag-Sullivan \cite{unwindspirals} demonstrated the dual nature of spirals $\mathcal{S}_{\phi}$ for decreasing $\phi$, lying in-between smoothness and ``roughness", as well as their connection to certain Riemann mapping questions. The existence of Lipschitz and H\"older parametrizations of certain $\cS_\phi$ has also been  studied by the aforementioned authors in \cite{unwindspirals}, by Fish-Paunescu in \cite{Fish}, and by Fraser in \cite{Fra_spirals}. 

In particular, Fraser in \cite{Fra_spirals} focuses on \textit{polynomial} spirals where $\phi(t)=t^{-p}$, for $p>0$, and shows that $\cS_p:= \mathcal{S}_{\phi}$ is an $\a$-H\"older arc for all $\a \in (0,p)$, with this upper bound on the exponent $\alpha$ being sharp.  In the same paper, Fraser suggests a programme of research focused on determining quantitative conditions under which two sets are bi-H\"older equivalent (see \cite[p.~3254]{Fra_spirals}). Towards this direction,
Burell-Falconer-Fraser \cite{BFF} further studied the elliptical spirals 
$$
\mathcal{S}_{p,q}= \{ t^{-p}\cos t+ \ci t^{-q}\sin t: t\in [2\pi,\infty) \}\cup \{(0,0)\},
$$ and provided bounds on the exponent of H\"older maps between two such spirals. Note that all elliptical spirals $\cS_{p,q}$ can be written in the form $\cS_{\phi}$, for some appropriate function $\phi :[2\pi,+\infty) \to (0,\infty)$.

Motivated by the interest in the regularity and H\"older classification of continuous spirals with no self-intersections, we  
define and study a general class of spirals that contains those studied in \cite{Fra_spirals,BFF}.
Given a continuous $\phi :[2\pi,+\infty) \to (0,\infty)$ with $\lim_{t\rightarrow \infty} \phi(t)= 0$, for all $j\in \N$ set 
$$
\mathcal{S}_{\phi}^j:=\{ \phi(t)e^{\ci t}: t\in [2\pi j, 2\pi (j+1)] \},
$$ and
$$
\phi_j:= \max\{ \phi(t):t\in [2\pi j, 2\pi (j+1)] \}.
$$
We say $\mathcal{S}_\phi$ is \textit{almost circular} if there is $C_\phi>0$ such that $\ell(\mathcal{S}_{\phi}^j)\leq C_\phi \phi_j$, for all $j\in \N$, where $\ell(\mathcal{S}_{\phi}^j)$ denotes the length of $\mathcal{S}_{\phi}^j$.
The main result is a necessary and sufficient quantitative condition that a spiral arc (i.e.~spiral with no self-intersections) with the above property needs to satisfy in order to be a H\"older arc.

\begin{thm}\label{thm: main}
Let $s>1$ and $\mathcal{S}_{\phi}$ be an almost circular spiral arc. Then  $\mathcal{S}_{\phi}$ is a $(1/s)$-H\"older arc if, and only if, $\sum_{n=1}^{\infty}\phi_n^s$ converges.
\end{thm}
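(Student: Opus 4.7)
The plan is to parametrize $\mathcal{S}_\phi$ loop-by-loop, spending ``time'' of length $\phi_n^s$ on the $n$-th loop $\mathcal{S}_\phi^n$, and to show that this allocation is both essentially forced by the H\"older condition and sufficient to achieve it.

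For the necessity, suppose $\gamma:[0,T]\to \mathcal{S}_\phi$ is a continuous injective $(1/s)$-H\"older parametrization with constant $C$. Each loop $\mathcal{S}_\phi^n$ is a connected subarc of $\mathcal{S}_\phi$, so its preimage is a compact subinterval $[a_n,b_n]\subset[0,T]$, and these intervals have pairwise disjoint interiors. The key geometric observation is $\diam(\mathcal{S}_\phi^n)\geq \phi_n$: pick $t^\ast\in[2\pi n,2\pi(n+1)]$ with $\phi(t^\ast)=\phi_n$; at least one of $t^\ast\pm\pi$ also lies in $[2\pi n,2\pi(n+1)]$, so $\mathcal{S}_\phi^n$ contains two antipodal points (viewed from the origin) at Euclidean distance $\phi(t^\ast)+\phi(t^\ast\pm\pi)\geq\phi_n$. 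Applying the H\"older estimate to this pair gives $\phi_n\leq C(b_n-a_n)^{1/s}$, and summation yields $\sum_n\phi_n^s\leq C^sT<\infty$.

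For sufficiency, set $T:=\sum_{n\geq 1}\phi_n^s<\infty$ and define $\gamma:[0,T]\to\mathcal{S}_\phi$ by $\gamma(T)=0$ and, on each interval $I_n:=[\sum_{k<n}\phi_k^s,\sum_{k\leq n}\phi_k^s]$ of length $\phi_n^s$, letting $\gamma$ traverse $\mathcal{S}_\phi^n$ at constant arc-length speed. Continuity at $T$ follows from $|\gamma(t)|\leq\phi_n\to0$ on $I_n$. The almost circular hypothesis gives $\gamma|_{I_n}$ Lipschitz with constant at most $\ell(\mathcal{S}_\phi^n)/\phi_n^s\leq C_\phi\phi_n^{1-s}$, so for $t,u\in I_n$ one has $|\gamma(t)-\gamma(u)|\leq C_\phi\phi_n^{1-s}(u-t)\leq C_\phi(u-t)^{1/s}$, using $(u-t)^{(s-1)/s}\leq\phi_n^{s-1}$. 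For the cross-loop case $t\in I_n$, $u\in I_m$ with $n<m$, split $[t,u]$ at the loop-boundary times $b_n\leq a_m$. Both points $\gamma(b_n)$ and $\gamma(a_m)$ lie on the positive real axis, with $|\gamma(b_n)|=\phi(2\pi(n+1))\leq\phi_{n+1}$ and $|\gamma(a_m)|=\phi(2\pi m)\leq\phi_{m-1}$; since $\phi_{n+1}^s,\phi_{m-1}^s\leq\sum_{k=n+1}^{m-1}\phi_k^s=a_m-b_n$ (with both vanishing if $m=n+1$), one gets $|\gamma(b_n)-\gamma(a_m)|\leq 2(a_m-b_n)^{1/s}$. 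Combining with the within-loop bounds on $[t,b_n]$ and $[a_m,u]$ via the elementary inequality $x^{1/s}+y^{1/s}+z^{1/s}\leq 3(x+y+z)^{1/s}$ gives the global H\"older estimate.

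The main obstacle is the cross-loop step of sufficiency: one must bound a Euclidean distance between points that can lie close to the origin yet many loops apart, and match it against the time allotted by the parametrization. The crucial ingredients are that all loop boundaries lie on the positive real axis and that the tail $\sum_{k\geq n}\phi_k^s$ dominates each individual $\phi_k^s$ in the tail, converting summability of $\phi_n^s$ into exactly the right $(1/s)$-H\"older control.
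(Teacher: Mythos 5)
Your proof is correct, but it takes a genuinely different route from the paper. The paper first develops an abstract characterization of $(1/s)$-H\"older arcs via the \emph{$s$-variation} $\|X\|_{s\text{-var}}$ (Proposition \ref{prop:mass}, where the parametrization is produced by reparametrizing along the normalized variation function $t\mapsto\|f([0,t])\|_{s\text{-var}}/\|X\|_{s\text{-var}}$), then proves a general comparison $\tfrac12\|\cS_\phi\|_{s\text{-var}}\leq\sum_j (H_{1/s}(\cS_\phi^j))^s\leq\|\cS_\phi\|_{s\text{-var}}$ for arbitrary spiral arcs (Theorem \ref{thm: Holder pieces}), and finally specializes using $H_{1/s}(\cS_\phi^j)\simeq\phi_j$, which is exactly your pair of observations ($\ell(\cS_\phi^j)\leq C_\phi\phi_j$ for the upper bound, $\diam\cS_\phi^j\geq\phi_j$ for the lower). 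You instead glue the constant-speed loop parametrizations by hand, allotting time $\phi_n^s$ to loop $n$, and verify the H\"older inequality directly; the cross-loop estimate works because the splitting times $b_n,a_m$ map to points of modulus at most $\phi_{n+1}$ and $\phi_{m-1}$, each of whose $s$-th power is a summand of $a_m-b_n$. Your necessity argument (preimages of loops are intervals with disjoint interiors, so $\sum\phi_n^s\leq C^s T$) is likewise a direct version of the paper's lower variation bound. What the paper's machinery buys is generality and sharp constants: Theorem \ref{thm: Holder pieces} applies to spiral arcs that are not almost circular, and Proposition \ref{prop:mass} identifies $H_{1/s}(X)^s$ exactly. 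What your argument buys is a short, self-contained, fully explicit parametrization with no abstract reparametrization lemma. Two trivial loose ends you should tie up: the case $u=T$ (the origin) is not in any $I_m$, but the estimate follows by letting $u_j\to T$ and using continuity; and one should note that $\gamma$ is a continuous bijection from a compact interval onto $\cS_\phi$ (since the spiral has no self-intersections), hence a homeomorphism, and that rescaling $[0,T]$ to $[0,1]$ only changes the H\"older constant.
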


In particular, Theorem \ref{thm: main} follows from an even more general result we prove for all spiral arcs $\cS_\phi$ with partitions $\cS_\phi^j$ that are H\"older in a uniform way; see Theorem \ref{thm: Holder pieces}.
The almost circular property is introduced for mainly two reasons. First, quantitative conditions on the H\"older regularity of spirals of the form $\cS_\phi$ would be at least as difficult to establish as those for  graphs of functions $\phi$, which is generally a challenging problem. Second, and as already mentioned, the polynomial spiral arcs from \cite{Fra_spirals} and the elliptical spiral arcs from \cite{BFF} are in fact almost circular, thus generalizing these already interesting classes. As a result, Theorem \ref{thm: main} allows us to recover the sharp exponent for the H\"older winding problem studied in \cite{Fra_spirals}, and to improve the results on the H\"older classification of spirals $\cS_{p,q}$ that were previously established in \cite{BFF} (see Section \ref{sec: Holder exponents improve} for details). In fact, in the case of spirals of the form $\cS_p$, $p>0$, Theorem \ref{thm: main} provides a sharp estimate on their H\"older classification in the following sense.

\begin{thm}\label{thm: main2}
    Let {$0<r\leq p\leq 1$}. There is a $r/p$-H\"older map $f:\cS_p\rightarrow\cS_r$, and a Lipschitz map $g: \cS_r\rightarrow\cS_p$. Moreover, every  map $h:\cS_p\rightarrow\cS_r$ that is $\alpha$-H\"older needs to satisfy $\alpha\leq r/p$.
\end{thm}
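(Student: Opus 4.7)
The plan is to handle the three claims separately, with explicit constructions for the existence of $f$ and $g$ and an application of Theorem \ref{thm: main} for the sharpness.

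For the existence, I would use the natural injective parametrizations $\psi_q:[2\pi,\infty]\to\cS_q$ given by $\psi_q(t)=t^{-q}e^{\ci t}$ (with $\psi_q(\infty)=0$) for $q\in\{r,p\}$, and define
\[
f(\psi_p(t))=\psi_r(t),\qquad g(\psi_r(t))=\psi_p(t),
\]
so that each map preserves the angular parameter $t$ while merely adjusting the radial decay rate. To verify regularity, I would expand
\[
|\psi_q(t_1)-\psi_q(t_2)|^2 = (t_1^{-q}-t_2^{-q})^2 + 2t_1^{-q}t_2^{-q}(1-\cos(t_2-t_1))
\]
and split into two regimes (assuming $t_1\le t_2$). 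When $t_2\le 2t_1$, the estimate $t_1^{-q}-t_2^{-q}\asymp q t_1^{-q-1}(t_2-t_1)$ yields $|\psi_q(t_1)-\psi_q(t_2)| \asymp t_1^{-q}\sqrt{q^2 u^2 + v^2}$, where $u=(t_2-t_1)/t_1\in[0,1]$ and $v=|\sin((t_2-t_1)/2)|\in[0,1]$. When $t_2>2t_1$, triangle-inequality estimates give $|\psi_q(t_1)-\psi_q(t_2)|\asymp t_1^{-q}$.

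With these in hand, the $r/p$-H\"older inequality for $f$, equivalently $|f(z_1)-f(z_2)|^p \le C|z_1-z_2|^r$, reduces in the first regime to boundedness of the ratio $(r^2 u^2+v^2)^{p/2}/(p^2 u^2+v^2)^{r/2}$; using $r\le p$ to bound the numerator by $(p^2 u^2+v^2)^{p/2}$ and then $p\le 1$, $u,v\le 1$ to give $p^2u^2+v^2\le 2$, this ratio is at most $2^{(p-r)/2}$. The Lipschitz estimate for $g$ reduces to boundedness of $t_1^{-2(p-r)}(p^2u^2+v^2)/(r^2u^2+v^2)$, which holds because $p\ge r$ ensures $t_1^{-2(p-r)}\le 1$ for $t_1\ge 2\pi$, and $(p^2u^2+v^2)/(r^2u^2+v^2)\le (p/r)^2$. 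The second regime is immediate in both cases from $|\psi_q(t_1)-\psi_q(t_2)|\asymp t_1^{-q}$.

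For the sharpness, suppose $h:\cS_p\to\cS_r$ is an $\alpha$-H\"older homeomorphism. The almost circular spiral $\cS_p$ satisfies $\phi_n\asymp n^{-p}$, so by Theorem \ref{thm: main}, for every $\alpha_1<p$ there is an $\alpha_1$-H\"older homeomorphism $\gamma_p:[0,1]\to\cS_p$. Then $h\circ\gamma_p:[0,1]\to\cS_r$ is an $(\alpha\alpha_1)$-H\"older homeomorphism, exhibiting $\cS_r$ as an $(\alpha\alpha_1)$-H\"older arc; since $\phi_n\asymp n^{-r}$ for $\cS_r$, Theorem \ref{thm: main} forces $\alpha\alpha_1<r$. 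Letting $\alpha_1\to p^-$ gives $\alpha\le r/p$. The main technical difficulty lies in the case analysis for $f$ and $g$: near-resonance configurations where $t_2-t_1$ is close to a nonzero multiple of $2\pi$ (so that $v$ nearly vanishes and only the radial term survives) must be handled alongside those where the angular term dominates, and the hypotheses $0<r\le p\le 1$ are precisely what make the radial/angular trade-off go in the required direction.
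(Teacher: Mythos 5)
Your proposal is correct and follows essentially the same route as the paper: your $f$ and $g$ are exactly the radial stretch maps $z\mapsto |z|^{r/p-1}z$ and $z\mapsto |z|^{p/r-1}z$ restricted to the spirals, and your sharpness argument (compose $h$ with a $\beta$-H\"older parametrization of $\cS_p$ for $\beta<p$ and apply Theorem \ref{thm: main} to $\cS_r$) is precisely the one in the paper. The only difference is that you verify the $r/p$-H\"older and Lipschitz bounds by a direct two-regime computation on the spiral, whereas the paper cites the known regularity of radial stretches; your computation checks out.
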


This paper is organized as follows. In Section \ref{sec:Hparamcriterion} a characterization of a H\"older arc is established by using the notion of variation of a metric arc. Section \ref{sec: Proof of main thm} contains the proof of Theorem \ref{thm: main}, which uses the aforementioned characterization. In Section \ref{sec: Holder exponents improve} we use Theorem \ref{thm: main} to recover the sharp exponents of H\"older regularity for polynomial spirals $\cS_p$ from \cite{Fra_spirals} and improve the classification estimates for elliptical spirals $\cS_{p,q}$ from \cite{BFF}. In the same section, the proof of Theorem \ref{thm: main2} and further remarks on the relation between the H\"older and the quasiconformal classification problem of spirals are included.

\subsection*{Background and notation}
Let $(X,d_X), (Y,d_Y)$ be non-empty metric spaces. We say that a map $f:X\rightarrow Y$ is \textit{$\alpha$-H\"older (continuous)}, for some $\alpha\in (0,1)$, if there is $C>0$ such that 
$$
d_Y(f(x_1), f(x_2))\leq C d_X(x_1,x_2)^\alpha,
$$ for all $x_1, x_2\in X$. The number $\alpha$ is called the \textit{H\"older exponent} of $f$, and the smallest $C>0$ is the \textit{H\"older semi-norm} of $f$, denoted by $\Hold_{\alpha} f$.

Recall that a metric space $X$ is a  \textit{metric arc} if  there is a homeomorphism $f$ mapping the interval $[0,1]$ onto the space $X$. Given an interval $I\subset [0,1]$, we say that $f(I)$ is a \textit{subarc} of $X$. Furthermore, if the interval $I$ has endpoints $a,b\in [0,1]$, we say that $f(I\setminus\{a,b\})$ is the \textit{interior} of the subarc $f(I)$. If $X$ and $f(I)$ have distinct interiors, we say that $f(I)$ is a \textit{proper subarc} of $X$. If $f$ is $\alpha$-H\"older for some $\alpha\in (0,1)$, we say that $X$ is an \textit{$\alpha$-H\"older arc}. 

For $s>0$, $\epsilon>0$, and a subset $E$ of the metric space $X$, the \textit{$s$-dimensional $\epsilon$-approximate Hausdorff measure} of $E$ is defined as
$$
\mathcal{H}_\epsilon^s(E)= \inf 
\left\{ \sum_i (\diam U_i)^s : \{ U_i\}\text{\  countable cover of}
\,\,E \,\,\text{with} \,\, \diam U_i\leq \epsilon  \right\}.
$$ The \textit{$s$-dimensional Hausdorff (outer) measure} of $E$ is the limit
$$
\mathcal{H}^s(E)=\lim_{\epsilon \to 0} \mathcal{H}_\epsilon^s(E).
$$

\section{The $s$-variation and $(1/s)$-H\"older rectifiability}\label{sec:Hparamcriterion}

A classification of Lipschitz arcs (and even more general Lipschitz curves) was given by Wa\.{z}ewski in 1927: an arc is a Lipschitz arc if and only if $\mathcal{H}^1(X)<\infty$, and if $X$ is a Lipschitz arc, then there exists a $(2\mathcal{H}^1(X))$-Lipschitz parameterization $f:[0,1]\to X$. See \cite[Theorem 4.4]{AO-curves} for the proof. An analogous result for $\frac1{s}$-H\"older arcs does not exist as there is no connection between $\frac1{s}$-H\"older parameterizations and $\mathcal{H}^s$; see for example \cite[Proposition 9.8]{BNV}. In this section we characterize $\frac1{s}$-H\"older arcs in terms of the \emph{$s$-variation}, the analogue of $\mathcal{H}^1$ from the Lipschitz arc case.

A \emph{partition} of a metric arc $(X,d_X)$ is a finite collection of subarcs $\mathcal{P} = \{X_1,\dots,X_n\}$ with disjoint interiors, and with their union being equal to $X$. For $s\geq 1$, define the \emph{$s$-variation} of $X$ by
\begin{equation}\label{eq:mass}
\|X\|_{s\text{-var}} := \sup_{\mathcal{P}} \sum_{X' \in \mathcal{P}} (\diam{X'})^s  \in [0,+\infty],
\end{equation}
with the supremum being over all partitions of $X$.

For $s\geq 1$, we set $H_{1/s}(X)$ to be the infimum of all constants $H>0$ for which there is a surjection $f:[0,1] \to X$ satisfying
\[ d_X(f(x),f(y)) \leq H |x-y|^{1/s}, \quad \text{for all $x,y \in [0,1]$.}\]
If no such $f$ exists, then $H_{1/s}(X)=\infty$.

The relation between quantities $\|X\|_{s\text{-var}}$ and $H_{1/s}(X)$, and the existence of H\"older parameterizations is given in the next proposition. 

\begin{prop}\label{prop:mass}
Let $X$ be a metric arc and $s\geq1$. Then
$$
\|X\|_{s\text{-var}}= H_{1/s}(X)^s.
$$
In particular, $X$ is a $(1/s)$-H\"older arc if, and only if, $\|X\|_{s\textup{-var}}<\infty$.
\end{prop}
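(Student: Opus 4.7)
For the direction $\|X\|_{s\text{-var}} \leq H_{1/s}(X)^s$, I would fix any $H > H_{1/s}(X)$ (the case $H_{1/s}(X) = \infty$ being trivial) and take a $(1/s)$-H\"older surjection $f:[0,1]\to X$ with constant $H$. Since $X$ is an arc, a standard monotonization of continuous surjections onto arcs produces a H\"older homeomorphism $g:[0,1]\to X$ with the same constant. Then for every partition $\{g([t_{i-1},t_i])\}_i$ of $X$, the H\"older estimate gives $\diam(g([t_{i-1},t_i]))^s \leq H^s (t_i - t_{i-1})$, and summing yields $\sum_i \diam(X_i)^s \leq H^s$; taking the supremum over partitions and then letting $H \searrow H_{1/s}(X)$ finishes this direction.

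For the reverse inequality, I assume $M^s := \|X\|_{s\text{-var}} < \infty$, fix a topological homeomorphism $g:[0,1]\to X$, and introduce the \emph{$s$-variation growth function} $\phi(t) := \|g([0,t])\|_{s\text{-var}}$. Concatenating a partition of $g([0,a])$ with a partition of $g([a,b])$ produces the superadditivity of $s$-variation on consecutive subarcs, and hence
\[
\phi(b) - \phi(a) \;\geq\; \|g([a,b])\|_{s\text{-var}} \;\geq\; \diam(g([a,b]))^s \;\geq\; d(g(a),g(b))^s
\]
for all $0 \leq a \leq b \leq 1$. Moreover $\phi(0)=0$, $\phi(1)=M^s$, and $\phi$ is strictly increasing because every non-degenerate subarc has positive diameter. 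Once $\phi$ is shown to be continuous, it is a homeomorphism $[0,1]\to[0,M^s]$, and $h := g \circ \phi^{-1}$ satisfies $d(h(u),h(v))^s \leq |u-v|$ on $[0,M^s]$ by the displayed inequality; rescaling $[0,M^s]$ linearly onto $[0,1]$ yields a $(1/s)$-H\"older surjection with constant $M = \|X\|_{s\text{-var}}^{1/s}$, proving $H_{1/s}(X) \leq \|X\|_{s\text{-var}}^{1/s}$. The \emph{in particular} clause follows immediately, since $X$ is a $(1/s)$-H\"older arc precisely when $H_{1/s}(X) < \infty$.

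The main obstacle I anticipate is the continuity of $\phi$. In the Lipschitz analog, $\mathcal{H}^1$ is a genuine Borel measure and hence automatically continuous on increasing unions, but $s$-variation is only superadditive for $s > 1$: the elementary computation $\|[0,1]\|_{s\text{-var}} = 1$ versus $\|[0,\tfrac12]\|_{s\text{-var}} + \|[\tfrac12,1]\|_{s\text{-var}} = 2^{1-s}$ already shows additivity fails. My plan is to show that any one-sided jump of $\phi$ at a point $t_0$ coincides with the limit $\lim_{\delta\to 0^+}\|g([t_0, t_0+\delta])\|_{s\text{-var}}$ via a refinement argument (splitting a near-optimal partition of $g([0,t_0+\delta])$ at $t_0$, with the resulting loss bounded by a constant multiple of $\diam(X)^{s-1}\cdot \diam(g([t_0,t_0+\delta]))$, which vanishes as $\delta \to 0$ by uniform continuity of $g$) and then to force this limiting local $s$-variation to vanish by exploiting finiteness of $\|X\|_{s\text{-var}}$ together with a careful analysis of how near-optimal partitions can concentrate near a single point.
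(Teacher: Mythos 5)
Your proposal follows essentially the same route as the paper in both directions: partitions pulled back through a H\"older parametrization for the inequality $\|X\|_{s\text{-var}}\le H_{1/s}(X)^s$, and reparametrization by the variation function $\phi(t)=\|g([0,t])\|_{s\text{-var}}$ for the converse. The first direction is fine (your direct estimate $\diam(g([t_{i-1},t_i]))^s\le H^s(t_i-t_{i-1})$ is in fact cleaner than the paper's $\epsilon$-argument); note only that both you and the paper pass from a H\"older \emph{surjection} to a monotone H\"older parametrization without proof --- the standard monotonization generally yields only a weakly monotone surjection rather than a homeomorphism, but that suffices here because every partition of the arc is still induced by an interval partition of $[0,1]$.

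The one genuine gap is the step you yourself flag: continuity of $\phi$. The paper does not prove it either --- it cites Friz--Victoir \cite[Proposition 5.8]{Var_book} (legitimately, since the diameter-based $s$-variation coincides with the usual point-based one: refining a partition by the two points realizing each diameter only increases the point-based sum). Your identification of the jump with $\lim_{\delta\to0^+}\|g([t_0,t_0+\delta])\|_{s\text{-var}}$, with loss of order $\diam(X)^{s-1}\diam g([t_0,t_0+\delta])$, is correct, but ``a careful analysis of how near-optimal partitions concentrate'' is not yet an argument. Two ways to close it: (i) peel off the first piece of a near-optimal partition of $[t_0,t_0+\delta]$ --- its contribution is at most $\mathrm{osc}(\delta)^s$ where $\mathrm{osc}(\delta)=\diam g([t_0,t_0+\delta])$, while the remaining pieces lie in $[s_1,t_0+\delta]$ with $s_1>t_0$, so by superadditivity they contribute at most $\phi(t_0+\delta)-\phi(t_0^+)$; combined with your near-additivity bound this forces the jump $c$ to satisfy $c\le\mathrm{osc}(\delta)^s+s\,\diam(X)^{s-1}\mathrm{osc}(\delta)\to0$; or (ii) if $\|g([t_0,t_0+\delta])\|_{s\text{-var}}\ge c>0$ for all $\delta$, stack near-optimal partitions of nested intervals $[t_0,t_0+\delta_k]$, filling each gap with a single piece at cost at most $\mathrm{osc}(\delta_1)^s<c/4$ per step, to produce partitions of $X$ with arbitrarily large $s$-sum, contradicting $\|X\|_{s\text{-var}}<\infty$. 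You also need left-continuity, by the symmetric argument on $[t_0-\delta,t_0]$. With this step supplied, your proof is complete and coincides with the paper's.
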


The H\"older regularity of a metric arc $X$ has been closely tied before to variation notions defined for continuous maps $g:[0,1]\rightarrow X$ (see \cite[Definitions 1.1, 5.1]{Var_book}). Specifically, given a homeomorphism $f:[0,1]\rightarrow X$, the methods leading to the proof of \cite[Proposition 5.14]{Var_book} could be applied to $f$ and yield Proposition \ref{prop:mass}. However, it should be noted that the statement of \cite[Proposition 5.14]{Var_book} is dependent on each given continuous $g:[0,1]\rightarrow X$.  Since our definition of $s$-variation is intrinsically more geometric and does not depend on any given parametrization of $X$, we include the proof in this context.

 
For the proof of Proposition \ref{prop:mass} we require several lemmas. 

\begin{lem}\label{lem:mass1}
Let $X$ be a metric arc and $s\geq1$. 
\begin{enumerate}
\item If $X'$ is a subarc of $X$, then $\|X'\|_{s\textup{-var}} \leq \|X\|_{s\textup{-var}}$. 
\item If $\|X\|_{s\textup{-var}}<\infty$ and $X'$ is a proper subarc of $X$, then $\|X'\|_{s\textup{-var}} < \|X\|_{s\textup{-var}}$.
\item We have $\|X\|_{s\textup{-var}} \geq \max\{\mathcal{H}^s(X),(\diam{X})^s\}$.
\item If $X=X_1\cup\cdots\cup X_n$ is a partition of $X$ into subarcs, then
\[\|X\|_{s\textup{-var}} \geq \|X_1\|_{s\textup{-var}} + \cdots + \|X_n\|_{s\textup{-var}}.\]
\end{enumerate}
\end{lem}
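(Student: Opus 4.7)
For each of the four parts, the common mechanism is to relate partitions of $X$ to partitions of its subarcs by concatenation or restriction; the only real subtleties are establishing the strict inequality in (2) and producing fine partitions in (3).

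For (1), I would take an arbitrary partition $\mathcal{P}'=\{X_1',\dots,X_m'\}$ of $X'$. Since $X'$ is a subarc of $X$, the closure of $X\setminus X'$ consists of at most two subarcs $Y_1,Y_2$ (possibly empty). Then $\mathcal{P} := \mathcal{P}' \cup \{Y_1,Y_2\}$ (dropping any empty $Y_i$) is a partition of $X$ into subarcs with pairwise disjoint interiors, and
\[
\sum_{X'' \in \mathcal{P}'} (\diam X'')^s \;\leq\; \sum_{X'' \in \mathcal{P}} (\diam X'')^s \;\leq\; \|X\|_{s\text{-var}}.
\]
Taking the supremum over $\mathcal{P}'$ yields (1).

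For (2), I would use the same extension, but now observe that since $X'$ is a proper subarc of $X$, at least one of the complementary arcs $Y_1,Y_2$ has positive diameter. Set $\delta := (\diam Y_1)^s + (\diam Y_2)^s > 0$; note $\delta$ depends only on $X$ and $X'$, not on $\mathcal{P}'$. Then for every partition $\mathcal{P}'$ of $X'$,
\[
\sum_{X'' \in \mathcal{P}'} (\diam X'')^s + \delta \;\leq\; \|X\|_{s\text{-var}},
\]
so $\|X'\|_{s\text{-var}} \leq \|X\|_{s\text{-var}} - \delta < \|X\|_{s\text{-var}}$, using that $\|X\|_{s\text{-var}}<\infty$ to justify the subtraction. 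This is the step I expect to be the main obstacle, mostly in nailing down that the complementary piece really does have positive diameter from the definition of ``proper'' (distinct interiors forces a nonempty open gap, hence a nondegenerate arc in the closure).

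For (3), the bound $(\diam X)^s \leq \|X\|_{s\text{-var}}$ follows by evaluating the supremum on the trivial partition $\mathcal{P}=\{X\}$. For the Hausdorff measure bound, fix a homeomorphism $f:[0,1]\to X$. By uniform continuity of $f$, for every $\varepsilon>0$ there is a partition $0=t_0<t_1<\cdots<t_N=1$ such that each subarc $X_i := f([t_{i-1},t_i])$ has diameter at most $\varepsilon$; this is a partition of $X$ in the sense of the paper, and the family $\{X_i\}$ is an $\varepsilon$-cover of $X$, so
\[
\mathcal{H}^s_\varepsilon(X) \;\leq\; \sum_{i=1}^N (\diam X_i)^s \;\leq\; \|X\|_{s\text{-var}}.
\]
Letting $\varepsilon \to 0$ gives $\mathcal{H}^s(X) \leq \|X\|_{s\text{-var}}$, completing (3).

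For (4), given a partition $X = X_1 \cup \cdots \cup X_n$ and any choice of partitions $\mathcal{P}_i$ of each $X_i$, the union $\mathcal{P} := \mathcal{P}_1 \cup \cdots \cup \mathcal{P}_n$ is a partition of $X$ because the $X_i$'s have disjoint interiors and each $\mathcal{P}_i$ refines $X_i$ with disjoint interiors. Hence
\[
\sum_{i=1}^n \sum_{X'' \in \mathcal{P}_i} (\diam X'')^s \;=\; \sum_{X'' \in \mathcal{P}} (\diam X'')^s \;\leq\; \|X\|_{s\text{-var}}.
\]
Since the $\mathcal{P}_i$ can be chosen independently, taking the supremum separately over each $\mathcal{P}_i$ yields $\sum_{i=1}^n \|X_i\|_{s\text{-var}} \leq \|X\|_{s\text{-var}}$, which proves (4).
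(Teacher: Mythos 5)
Your proof is correct and follows essentially the same route as the paper's: extend a partition of the subarc to a partition of $X$ by adjoining the complementary arcs, use arbitrarily fine partitions as Hausdorff covers for (3), and concatenate partitions for (4). The only cosmetic difference is in (2), where you extract a uniform gap $\delta$ from the complementary arcs that is valid for every partition of $X'$, whereas the paper works with a single near-optimal partition of $X'$ and a $\tfrac12(\diam Y)^s$ slack; both arguments are sound.
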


\begin{proof}
Property (1) is immediate from the definition. 

For (2), assume that $\|X\|_{s\textup{-var}}<\infty$ and that $X'$ is a proper subarc of $X$. Let $Y$ be a subarc of $X$ that intersects with $X'$ only at an endpoint. Let also $X_1,\dots,X_n$ be a partition of $X'$ such that
\[ \|X'\|_{s\textup{-var}} \leq \sum_{i=1}^n (\diam{X_i})^s + \tfrac12 (\diam{Y})^s.\]
Then,
\[ \|X\|_{s\textup{-var}} \geq \|X'\cup Y\|_{s\textup{-var}} \geq \sum_{i=1}^n (\diam{X_i})^s + (\diam{Y})^s > \|X'\|_{s\textup{-var}}.\]

For (3), note first that $\mathcal{P}=\{X\}$ is a partition of $X$ so 
$$\|X\|_{s\text{-var}} \geq (\diam{X})^s.$$
To show that $\|X\|_{s\text{-var}} \geq \mathcal{H}^s(X)$, fix $\e>0$ and let $\mathcal{P}$ be a partition of $X$ such that $\diam{X'} < \e$ for all $X'\in\mathcal{P}$. Then,
\[ \mathcal{H}^s_{\e}(X) \leq \sum_{X'\in\mathcal{P}} (\diam{X'})^s \leq   \|X\|_{s\text{-var}}.\]
Letting $\e$ go to $0$, we obtain the desired inequality.

The proof of (4) follows immediately from the definition of $s$-variation.
\end{proof}

\begin{lem}\label{lem:masscont}
Let $s\geq1$, let $X$ be a metric arc with $\|X\|_{s\textup{-var}} < \infty$, and let $f:[0,1]\to X$ be a homeomorphism. Then the function $t \mapsto \|f([0,t])\|_{s\textup{-var}}$ is continuous.
\end{lem}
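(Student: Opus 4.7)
The function $\mu(t) := \|f([0,t])\|_{s\text{-var}}$ is non-decreasing by Lemma~\ref{lem:mass1}(1) and bounded above by $\|X\|_{s\text{-var}} < \infty$, so the task is to rule out jumps from each side at every $t_0 \in [0,1]$.

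Left-continuity is the easy direction. Given $\epsilon > 0$ I would fix a partition $X_1,\dots,X_n$ of $f([0,t_0])$ with $\sum (\diam X_i)^s > \mu(t_0) - \epsilon$ and write the last piece as $X_n = f([r, t_0])$. For $t \in (r, t_0)$ the collection $X_1,\dots,X_{n-1}, f([r,t])$ is a valid partition of $f([0,t])$, so continuity of $f$, hence of $t \mapsto \diam f([r,t])$, gives $\liminf_{t \to t_0^-} \mu(t) \geq \sum_{i=1}^{n-1}(\diam X_i)^s + (\diam X_n)^s > \mu(t_0) - \epsilon$.

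Right-continuity is the main obstacle, because attaching a piece at the right end of a partition of $f([0,t_0])$ only gives a lower bound on $\mu(t)$, and the $s$-variation is only superadditive, not additive, along concatenation of subarcs. The plan is twofold. First, by applying the left-continuity argument above to the reversed homeomorphism $g(u) := f(1-u)$, I obtain right-continuity of $s \mapsto \|f([s,1])\|_{s\text{-var}}$; combined with Lemma~\ref{lem:mass1}(4) applied to the partition $\{f([t_0,t]),f([t,1])\}$ of $f([t_0,1])$, this yields the auxiliary fact $\|f([t_0,t])\|_{s\text{-var}} \le \|f([t_0,1])\|_{s\text{-var}} - \|f([t,1])\|_{s\text{-var}} \to 0$ as $t \to t_0^+$. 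Second, given a partition $\mathcal{P}$ of $f([0,t])$ with $\sum_{\mathcal{P}} > \mu(t) - \epsilon$, I insert $t_0$ as a new partition point: if $t_0$ lies in the interior of some $X_k = f([a,b]) \in \mathcal{P}$, split it into $X_k' = f([a,t_0])$ and $X_k'' = f([t_0,b])$. The refined partition splits at $t_0$ into partitions of $f([0,t_0])$ and $f([t_0,t])$ whose combined sum is at most $\mu(t_0) + \|f([t_0,t])\|_{s\text{-var}}$. The loss $(\diam X_k)^s - (\diam X_k')^s - (\diam X_k'')^s$ incurred by the insertion is bounded, via the triangle inequality $\diam X_k \leq \diam X_k' + \diam X_k''$ together with the convex mean value estimate $y^s - x^s \leq s y^{s-1}(y-x)$ for $s \geq 1$, by $s(\diam X)^{s-1}\diam f([t_0,t])$, which also tends to $0$ by uniform continuity of $f$. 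Passing to the $\limsup$ as $t \to t_0^+$ then gives $\limsup \mu(t) \leq \mu(t_0) + \epsilon$, and arbitrariness of $\epsilon$ finishes the argument.
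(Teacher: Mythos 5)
Your proof is correct, and it takes a genuinely different route from the paper: the paper disposes of this lemma in one line by citing \cite[Proposition 5.8]{Var_book}, whereas you give a self-contained argument from the definition of $\|\cdot\|_{s\text{-var}}$. Your argument is essentially a geometric, parametrization-free reworking of the standard variation-function continuity proof, and all the delicate points are handled correctly: you rightly identify that left-continuity is soft (extend the last piece of a near-optimal partition and use continuity of $t\mapsto\diam f([r,t])$), that right-continuity is the real issue because Lemma \ref{lem:mass1}(4) only gives superadditivity, and that the naive bound $\|f([t_0,t])\|_{s\text{-var}}\leq\mu(t)-\mu(t_0)$ is useless here --- your reversal trick (left-continuity for $u\mapsto f(1-u)$ plus superadditivity on $f([t_0,1])$) is exactly what is needed to get $\|f([t_0,t])\|_{s\text{-var}}\to0$. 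The splitting estimate is also right: since $X_k'\subset X_k$ and $\diam X_k\leq\diam X_k'+\diam X_k''$, the mean value inequality gives $(\diam X_k)^s-(\diam X_k')^s\leq s(\diam X_k)^{s-1}\diam X_k''\leq s(\diam X)^{s-1}\diam f([t_0,t])$, and the $\epsilon$ from the choice of $\mathcal{P}$ is uniform in $t$, so the $\limsup$ passage is legitimate. What your approach buys is a proof entirely in terms of the intrinsic $s$-variation of subarcs, consistent with the paper's stated preference for a parametrization-free notion, at the cost of about a page of argument; what the paper's citation buys is brevity, at the cost of translating between the intrinsic variation and the path-dependent variation of \cite{Var_book}. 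Either is acceptable; yours would make the paper more self-contained.
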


\begin{proof}
The continuity of the function $t \mapsto \|f([0,t])\|_{s\textup{-var}}$ follows by \cite[Proposition 5.8]{Var_book} applied to the given homeomorphism $f$.
\end{proof}

We are now ready to prove Proposition \ref{prop:mass}.

\begin{proof}[{Proof of Proposition \ref{prop:mass}}] 
Suppose first that $H_{1/s}(X)<\infty$. Then, there exists a $(1/s)$-H\"older homeomorphism $f:[0,1] \to X$ with $H_{1/s}(X)\leq \Hold_{1/s}{f}$. Fix arbitrary $H > \Hold_{1/s}{f}$ and $\e\in (0,1)$. 

Let $X_1,\dots,X_n$ be a partition of $X$ and for each $i\in\{1,\dots,n\}$ let $I_i = f^{-1}(X_i)$. Then $\{I_1,\dots,I_n\}$ is an interval partition of $[0,1]$. For each $i\in\{1,\dots,n\}$, let $x_i,y_i \in I_i$ such that 
\[ d_X(f(x_i) - f(y_i)) \geq (1-\e)\diam{f(I_i)}\]
and denote by $J_i$ the interval in $I_i$ with endpoints $x_i,y_i$. 
By choice of $x_i, y_i$ and H\"older continuity of $f$, we have
\[ (\diam{f(I_i)})^s \leq (1-\e)^{-s} d_X(f(x_i) , f(y_i))^s \leq  (1-\e)^{-s}H^s\diam{J_i}.\]
 Since $I_i=f^{-1}(X_i)$, the above implies
\[ \sum_{i=1}^{n} (\diam{X_i})^s \leq (1-\e)^{-s}H^s\sum_{i=1}^{n} \diam{J_i} \leq (1-\e)^{-s}H^s.\]
Taking supremum over all partitions and letting $\e\to 0$ we obtain $\|X\|_{s\text{-var}} \leq H^s$. Since $H > \Hold_{1/s}{f}$ is arbitrary, and $\Hold_{1/s}{f}$ can be chosen as close to $H_{1/s}(X)$ as necessary, this implies that $\|X\|_{s\text{-var}} \leq H_{1/s}(X)^s$.

For the converse,  let  $f:[0,1] \to X$ be a homeomorphism, $s\geq 1$, and assume that $\|X\|_{s\text{-var}} < \infty$. For each $x\in[0,1]$ define 
\[ \psi(x) = \frac{\|f([0,x])\|_{s\text{-var}}}{\|X\|_{s\text{-var}}}.\]
By Lemma \ref{lem:mass1}, $\psi$ is an increasing function from $[0,1]$ into $[0,1]$ and, by Lemma \ref{lem:masscont}, $\psi$ is continuous. Since $\psi(0) = 0$ and $\psi(1)=1$, it follows that $\psi$ is surjective. By Lemma \ref{lem:mass1}(2), it follows that $\psi$ is in fact a homeomorphism. This allows for the definition of $F=f\circ \psi^{-1}:[0,1] \to X$. 

It remains to show that $F$ is $(1/s)$-H\"older. Let $0 \leq x<y\leq 1$ and let $0\leq x'<y'\leq 1$ be such that $\psi(x') = x$ and $\psi(y') = y$. Then,
\begin{align*}
d_X(F(x), F(y))^s &= d_X(f(x'),f(y'))^s\\ 
&\leq \|f([x',y'])\|_{s\text{-var}}\\ 
&\leq \|f([0,y'])\|_{s\text{-var}} - \|f([0,x'])\|_{s\text{-var}} \\
&= \|X\|_{s\text{-var}} |\psi(x')-\psi(y')|\\ 
&= \|X\|_{s\text{-var}}|x-y|.
\end{align*}
Therefore, $F$ is $(1/s)$-H\"older continuous with $\Hold_{1/s}{F} \leq \|X\|_{s\text{-var}}^{1/s}$. As a result, 
$$
H_{1/s}(X)\leq \Hold_{1/s}{F}\leq \|X\|_{s\text{-var}}^{1/s}<\infty,
$$ which allows to apply the already proved opposite direction and yields $H_{1/s}(X)=\|X\|_{s\text{-var}}^{1/s}$ as needed. This equality and the definition of $H_{1/s}(X)$ complete the proof.
\end{proof}


\section{H\"older rectifiability of spiral arcs}\label{sec: Proof of main thm}

Fix for the rest of this section a spiral arc $\mathcal{S}_{\phi}$, and recall that for $j\in \N$, we set 
$$\cS_\phi^j=\{ \phi(t)e^{\ci t}: t\in [2\pi j, 2\pi (j+1)] \}.$$ Note that since $\cS_\phi$ is an arc, the sequence $(\diam \mathcal{S}_{\phi}^j)_{j\in \N}$ is decreasing. We prove the following result for general spiral arcs.

\begin{thm}\label{thm: Holder pieces}
Let $s\geq 1$. Then $\mathcal{S}_{\phi}$ is a $\frac1{s}$-H\"older arc if and only if $\sum_{j=1}^{\infty}(H_{1/s}(\mathcal{S}_{\phi}^j))^s$ converges.
\end{thm}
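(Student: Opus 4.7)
The plan is to invoke Proposition \ref{prop:mass} on each side of the equivalence so that the theorem reduces to showing $\|\mathcal{S}_\phi\|_{s\text{-var}}<\infty$ if and only if $\sum_{j=1}^\infty \|\mathcal{S}_\phi^j\|_{s\text{-var}}<\infty$. The forward direction is immediate from the superadditivity of $\|\cdot\|_{s\text{-var}}$: for every $N$, the collection $\{\mathcal{S}_\phi^1,\dots,\mathcal{S}_\phi^N, Y_{N+1}\}$, with $Y_{N+1}=\{\phi(t)e^{\ci t}:t\geq 2\pi(N+1)\}\cup\{0\}$, is a valid partition of $\mathcal{S}_\phi$, so Lemma \ref{lem:mass1}(4) yields $\sum_{j=1}^N \|\mathcal{S}_\phi^j\|_{s\text{-var}} \leq \|\mathcal{S}_\phi\|_{s\text{-var}}$, and sending $N\to\infty$ gives the desired convergence.

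For the reverse direction, I would first establish two geometric facts about the spiral arc. First, the sequence $(\phi_j)_{j\in\N}$ is strictly decreasing: the continuous function $\psi(t):=\phi(t+2\pi)-\phi(t)$ is nowhere zero on $[2\pi,\infty)$ by non-self-intersection of $\mathcal{S}_\phi$, hence has constant sign; if $\psi>0$ then iteration gives $\phi(t+2\pi n)>\phi(t)>0$ for all $n$, contradicting $\phi(t+2\pi n)\to 0$. Thus $\phi(t+2\pi)<\phi(t)$ for every $t$, and therefore $\phi_{j+1}<\phi_j$. Second, every winding $\mathcal{S}_\phi^j$ traverses all angles, so it contains a point at distance $\phi_j$ from the origin together with a point at the diametrically opposite angle, giving $\phi_j \leq \diam \mathcal{S}_\phi^j \leq H_{1/s}(\mathcal{S}_\phi^j)$, the last inequality being immediate from the definition of $H_{1/s}$.

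Given any partition $X_1,\dots,X_n$ of $\mathcal{S}_\phi$, I would split the subarcs into two classes: \emph{local} (those contained in a single winding $\mathcal{S}_\phi^j$) and \emph{long} (those meeting at least two windings; note that the subarc having the origin as an endpoint is necessarily long). For each $j$, the local subarcs inside $\mathcal{S}_\phi^j$ form a partition of one subarc of $\mathcal{S}_\phi^j$, so Lemma \ref{lem:mass1}(1) bounds their $s$-contribution by $\|\mathcal{S}_\phi^j\|_{s\text{-var}}$. For each long $X_i$, letting $j_i$ denote the smallest winding index it meets, the monotonicity of $\phi_j$ confines $X_i$ to the closed disk of radius $\phi_{j_i}$, so $\diam X_i \leq 2\phi_{j_i} \leq 2 H_{1/s}(\mathcal{S}_\phi^{j_i})$; moreover the indices $j_i$ of distinct long subarcs are strictly increasing, because each long subarc ends at least one winding past its starting one. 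Summing both contributions produces the bound $\|\mathcal{S}_\phi\|_{s\text{-var}} \leq (1+2^s)\sum_j H_{1/s}(\mathcal{S}_\phi^j)^s$, and a final application of Proposition \ref{prop:mass} delivers the $(1/s)$-H\"older parametrization. The most delicate step I foresee is establishing the strict monotonicity of $\phi_j$; once that is in hand, the bookkeeping in the last paragraph is elementary.
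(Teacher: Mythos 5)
Your proposal is correct and follows essentially the same strategy as the paper: Proposition \ref{prop:mass} plus superadditivity (Lemma \ref{lem:mass1}(4)) for the forward direction, and for the converse a splitting of an arbitrary partition into subarcs contained in a single winding and subarcs straddling several windings, with the straddling ones controlled via the monotonicity of $(\phi_j)$ so that each winding index is charged at most once. The only differences are cosmetic --- the paper gets the constant $2$ rather than $1+2^s$ and isolates the piece containing the origin as a separate case --- and your explicit derivation of the strict monotonicity of $(\phi_j)$ from the non-self-intersection hypothesis is a detail the paper leaves implicit.
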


Before proving Theorem \ref{thm: Holder pieces}, let us first give the proof of Theorem \ref{thm: main}.

\begin{proof}[{Proof of Theorem \ref{thm: main}}]
Suppose that $\mathcal{S}_{\phi}$ is almost circular. Fix $j\in\N$ and $s\geq 1$. On the one hand, if $h_j : [0,1] \to \mathcal{S}_{\phi}^j$ is the constant-speed Lipschitz parametrization map, 
then for all $x,y \in [0,1]$,
\[ |h_j(x)-h_j(y)| \leq \ell(\mathcal{S}_{\phi}^j)|x-y| \leq \ell(\mathcal{S}_{\phi}^j)|x-y|^{1/s} \leq C_\phi \phi_j |x-y|^{1/s}. \]
On the other hand, if $h : [0,1] \to \mathcal{S}_{\phi}^j$ is $\frac1{s}$-H\"older with H\"older constant $H$, then there exist $x,y \in [0,1]$ such that $|h(x)-h(y)| = \diam{\mathcal{S}_{\phi}^j}$ which gives
\[ H \geq H|x-y|^{1/s} \geq |h(x)-h(y)| = \diam{\mathcal{S}_{\phi}^j} \geq \phi_j.\]
Note that the last inequality above follows by the fact that $\mathcal{S}_\phi^j$ is closed, the fact that $\phi$ is strictly decreasing, and an application of the triangle inequality. Hence, $H_{1/s}(\mathcal{S}_{\phi}^j) \simeq \phi_j$, and the statement follows directly from Theorem \ref{thm: Holder pieces}.
\end{proof}

We now turn to the proof of Theorem \ref{thm: Holder pieces}. 


\begin{proof}[{Proof of Theorem \ref{thm: Holder pieces}}]
Fix $s\geq 1$. We show that
\begin{equation}\label{eq:Holderpieces}
\frac12\|\mathcal{S}_{\phi}\|_{s\text{-var}} \leq \sum_{j=1}^{\infty} (H_{1/s}(\mathcal{S}_{\phi}^j))^s \leq \|\mathcal{S}_{\phi}\|_{s\text{-var}}
\end{equation}
and Theorem \ref{thm: Holder pieces} follows directly from Proposition \ref{prop:mass}.

For the upper bound of \eqref{eq:Holderpieces} fix $k\in\N$ and note that $\mathcal{S}_{\phi}^1,\dots, \mathcal{S}_{\phi}^k, \mathcal{S}_{\phi}\setminus \bigcup_{j=1}^k\mathcal{S}_{\phi}^j$ is a partition of $\mathcal{S}_{\phi}$. By Proposition \ref{prop:mass} and Lemma \ref{lem:mass1}(4),
\[ \sum_{j=1}^{k}(H_{1/s}(\mathcal{S}_{\phi}^j))^s = \sum_{j=1}^{k}\|\mathcal{S}_{\phi}^j\|_{s\text{-var}} \leq \|\mathcal{S}_{\phi}\|_{s\text{-var}}. \]

For the lower bound of \eqref{eq:Holderpieces}, let $X_1, \dots,X_n$ be a partition of $\mathcal{S}_{\phi}$, where the arcs $X_j$ are enumerated according to the orientation of $\mathcal{S}_{\phi}$, with $0 \in X_n$. Let $k\in \N$ be the maximal index so that 
$$
X_n \subset \overline{\mathcal{S}_{\phi}\setminus(\mathcal{S}_{\phi}^1\cup \dots \cup \mathcal{S}_{\phi}^{k-1})}.
$$
We consider three subsets $P_1, P_2, P_3$ of the indices set $\{1,\dots,n\}$, and for indices in each $P_i$ we prove corresponding estimates.

\emph{Estimate 1:} 
Set $P_1=\{X_n\}$. By definition of $\mathcal{S}_{\phi}^k$ and decreasing property of their diameters, by Proposition \ref{prop:mass} and by Lemma \ref{lem:mass1} we have 
\[ (\diam{X_n})^s \leq (\diam \mathcal{S}_{\phi}^k)^s \leq \|\mathcal{S}_{\phi}^k\|_{s\text{-var}} = (H_{1/s}(\mathcal{S}_{\phi}^k))^s.\]

\emph{Estimate 2:} Let
$$
L=\{ l\in\{1,\dots, k\}: \, \text{there exists }j\in \{1,\dots, n\} \,\, \text{such that} \,\, X_j\subset \cS_\phi^l \},
$$
and set
$$
P_2=\{ j\in \{1,\dots, n\}: \, \text{there exists } l\in L\,\, \text{such that}\,\, X_j\subset \cS_\phi^l\}.
$$
Given $l\in L$,  let $\{j_1^l,\dots,j_m^l\}$ be a maximal set of indices in $P_2$ such that $X_{j_1^l},\dots, X_{j_m^l}\subset \cS_\phi^l$. Then, by Lemma \ref{lem:mass1}, and by Proposition \ref{prop:mass} we have
\begin{align*}
(\diam{X_{j_1^l}})^s + \cdots + (\diam{X_{j_m^l}})^s &\leq \| X_{j_1^l}\|_{s\text{-var}} + \cdots + \| X_{j_m^l}\|_{s\text{-var}} \\
&\leq \|X_{j_1^l}\cup\cdots\cup X_{j_m^l}\|_{s\text{-var}}\\
&\leq \|\cS_\phi^l\|_{s\text{-var}}\\
&= (H_{1/s}(\mathcal{S}_{\phi}^l))^s.
\end{align*}

\emph{Estimate 3:} Finally, set $P_3=\{1,\dots, n\}\setminus(P_1\cup P_2)$. For each $j \in P_3$, there exists a minimal $k_j \in \N$ such that $X_j \cap \mathcal{S}_{\phi}^{k_j} \neq \emptyset$. Moreover, if $j,j'$ are two distinct such indices, then $k_j \neq k_{j'}$. For any  $j \in P_3$, by the decreasing property of $(\diam \mathcal{S}_{\phi}^m)_{m\in \N}$, we have 
\begin{align*}
(\diam{X_j})^s \leq (\diam{\bigcup_{m=k_j}^{\infty}\mathcal{S}_{\phi}^m})^s \leq (\diam \mathcal{S}_{\phi}^{k_j})^s \leq \|\mathcal{S}_{\phi}^{k_j}\|_{s\text{-var}} = (H_{1/s}(\mathcal{S}_{\phi}^{k_j}))^s.
\end{align*}

Putting the three estimates together, we get
\[ \sum_{j=1}^{n} (\diam{X_j})^s= \sum_{i=1}^3 \sum_{j\in P_i}(\diam{X_j})^s\leq 2\sum_{n=1}^{\infty}  (H_{1/s}(\mathcal{S}_{\phi}^{n}))^s,\]
since the term $(H_{1/s}(\mathcal{S}_{\phi}^{k}))^s$ may appear at most two times, through the sums over $j$ in $P_1$ and $P_2$, and the rest of the terms may appear at most twice through the sums over $P_2$ and $P_3$.
Since the partition $\{X_1,\dots, X_n\}$ is arbitrary, it follows that 
\[ \|\mathcal{S}_{\phi}\|_{s\text{-var}} \leq 2\sum_{n=1}^{\infty}  (H_{1/s}(\mathcal{S}_{\phi}^{n}))^s. \qedhere\]
\end{proof}

\section{H\"older exponents between spirals}\label{sec: Holder exponents improve}

Suppose $0<p\leq q$, and define the spiral arc
$$
\mathcal{S}_{p,q}= \{ t^{-p}\cos t+ \ci t^{-q}\sin t: t\in [2\pi,\infty) \}\cup \{(0,0)\}.
$$
Burrell, Falconer, and Fraser in \cite[Theorems 2.9, 2.11]{BFF} gave the following upper bounds on the H\"older exponent for maps between such spirals.
\begin{thm}[\cite{BFF}]\label{thm: Falconer}
    Suppose $f:\mathcal{S}_{p,q}\rightarrow \mathcal{S}_{r,s}$ is $\alpha$-H\"older, with $r\leq 1$. If $p> 1$, then
    \begin{equation}\label{eq: Falc_p>1}
        \alpha\leq \frac{1+s}{2+s-r}.
    \end{equation}
    Otherwise, if $p\leq 1$, then 
    \begin{equation}\label{eq: Falc_p<=1}
        \alpha\leq \frac{p+q+r+s-pr+qs}{(2+s-r)(p+q)}.
    \end{equation}
\end{thm}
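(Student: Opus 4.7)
The plan is to combine Theorem \ref{thm: main} with the multiplicative behavior of H\"older compositions in order to handle most parameter regimes, and then point out the additional invariant needed to close the proof in full generality. First, I would verify that $\mathcal{S}_{p,q}$ (with $p\le q$) is almost circular in the sense of the introduction: after reparameterizing by angle (which is monotone for $t$ large enough by a derivative check), $\phi_j\asymp j^{-p}$ since the distance to the origin on the $j$-th loop attains its maximum $\asymp j^{-p}$ near $\cos t=\pm 1$, and the loop length is $\asymp j^{-p}$ as well, since the perimeter of an ellipse with semi-axes $a\ge b$ is comparable to $a$. Theorem \ref{thm: main} then shows that $\mathcal{S}_{p,q}$ is a $\beta$-H\"older arc iff $\beta<p$ when $p\le 1$, and is rectifiable (hence admits a Lipschitz parametrization via Wa\.{z}ewski's theorem) when $p>1$. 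The same statements hold for $\mathcal{S}_{r,s}$.

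Next I would use the composition trick: assuming $f$ is surjective (the only non-trivial case, since otherwise the image is itself a subarc of $\mathcal{S}_{r,s}$ with a tail near the origin to which the following argument still applies), composing $f$ with a $\beta$-H\"older parametrization $g:[0,1]\to\mathcal{S}_{p,q}$ yields an $\alpha\beta$-H\"older parametrization of $\mathcal{S}_{r,s}$. The necessity direction of Theorem \ref{thm: main} applied to $\mathcal{S}_{r,s}$ then forces $\alpha\beta\le r$; letting $\beta$ approach $\min(p,1)$ produces the bound $\alpha\le r/p$ for $p\le 1$ and $\alpha\le r$ for $p>1$. A direct algebraic comparison shows these already imply the BFF estimates in the regime where $r<p$.

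The main obstacle arises when $p=r$ but $q\ne s$: here the composition bound degenerates to the trivial $\alpha\le 1$, while the BFF formula remains genuinely informative. This is because $\phi_j=\max_{\mathcal{S}_\phi^j}|\cdot|$ captures only the major axis $j^{-p}$, so Theorem \ref{thm: main} is insensitive to the minor axis $j^{-q}$ and cannot distinguish $\mathcal{S}_{p,q}$ from $\mathcal{S}_{p,q'}$. To cover this regime I would fall back on the dimensional approach originally used in \cite{BFF}: compute $\overline{\dim}_B\mathcal{S}_{p,q}$ via a two-regime box count (``separated windings'' where consecutive loops are further apart than $\epsilon$ and ``overlapping windings'' where they are not, with the transition angle depending on both $p$ and $q$), and then invoke the classical inequality $\overline{\dim}_B f(X)\le\alpha^{-1}\overline{\dim}_B X$. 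The genuinely delicate step is the box count itself; once the dimensions are in hand, substituting into the H\"older--dimension inequality produces the rational expressions appearing in the statement.
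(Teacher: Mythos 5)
First, a point of context: the paper does not prove this statement at all --- it is imported verbatim from \cite[Theorems 2.9, 2.11]{BFF} and used purely as a benchmark against which the authors compare their own bound $\alpha\le r/p$ from \eqref{eq: improve Falc}. So there is no in-paper argument to match, and your proposal has to stand on its own. It does not, for two reasons.

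The decisive gap is your claim that the composition bound $\alpha\le r/p$ ``already implies the BFF estimates in the regime where $r<p$.'' The paper itself refutes this: for $p=3/5$, $r=s=1/2$ (so $r<p\le 1$) one has $r/p=5/6$, while \eqref{eq: Falc_p<=1} equals $(13+15q)/(12+20q)$, which falls strictly below $5/6$ for every $q>9/5$. In that range the BFF bound is genuinely stronger than anything the composition trick can see, because the parametrization quality of $\cS_{p,q}$ furnished by Theorem \ref{thm: main} depends only on $\phi_j\asymp j^{-p}$, i.e.\ only on the major axis; the parameter $q$ never enters. So the regime your argument misses is not just $p=r$ with $q\ne s$, but a large portion of $r<p$ as well. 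Your fallback --- a box count plus $\overline{\dim}_B f(X)\le\alpha^{-1}\overline{\dim}_B X$ --- cannot close this gap either when $p\le 1$: that inequality yields $\alpha\le \overline{\dim}_B\cS_{p,q}/\overline{\dim}_B\cS_{r,s}$, and \eqref{eq: Falc_p<=1} is not of that form (note its denominator carries the factor $p+q$ rather than $1+q$). Already in the polynomial sub-case $p=q$, $r=s$ with $r<p<1$ the box-dimension ratio is $\tfrac{1+r}{1+p}$, which is strictly larger than the claimed $\tfrac{p+r}{2p}$ (e.g.\ $5/6$ versus $3/4$ at $p=1/2$, $r=1/4$). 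What \cite{BFF} actually use is the full intermediate-dimension profile $\theta\mapsto\dim_\theta$ of the spirals together with a H\"older distortion estimate for intermediate dimensions, optimized over $\theta$; the $\theta=1$ (box-dimension) endpoint alone is strictly weaker. That machinery is the missing ingredient. The one regime where your sketch is sound is $p>1$: there $\cS_{p,q}$ is rectifiable, $\overline{\dim}_B\cS_{p,q}=1$, and the box-dimension inequality does give $\alpha\le 1/\overline{\dim}_B\cS_{r,s}=(1+s)/(2+s-r)$, which is \eqref{eq: Falc_p>1}.
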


For $p\leq 1$, Theorem \ref{thm: main} also provides bounds on the H\"older exponent of maps between such spirals in an implicit way. Recall that there are functions $\phi, \psi:[2\pi,+\infty)\rightarrow(0,\infty)$ that tend to $0$ as $t\rightarrow \infty$ with $\mathcal{S}_{p,q}=\mathcal{S}_\phi$ and $\mathcal{S}_{r,s}=\mathcal{S}_\psi$. Note that it is non-trivial to explicitly determine $\phi$ and $\psi$, due to the implicit relation between arguments of 
$$
z_t=t^{-p}\cos t+ \ci t^{-q}\sin t \in \cS_\phi,
$$ 
for some $t\geq 2\pi$, and the modulus $|z_t|$. Namely, while the distance of $z_t$ from $0$ is indeed just $|z_t|$, the value $t$ is not always an argument of $z_t$, which makes the naive approach of choosing $\phi(t)=|z_t|$ generally incorrect for $p\neq q$. However, it is easier to determine $\phi(t_k)$ and $\psi(t_k)$ at $t_k= k\pi/2$, for integers $k\geq 4$, which is enough to imply that $\mathcal{S}_\phi,\mathcal{S}_\psi$ are almost circular. This can also be seen through the relation of these spirals to the corresponding concentric ellipses (see \cite[p.~7]{BFF}). Moreover, by the aforementioned values $\phi(t_k)$ and $\psi(t_k)$, we conclude that
$$
\phi_j= (2\pi j)^{-p}, \,\,\, \psi_j= (2\pi j)^{-r},
$$ for all $j\in \N$. Suppose that $h:\mathcal{S}_{p,q}\rightarrow \mathcal{S}_{r,s}$ is $\alpha$-H\"older. By Theorem \ref{thm: main}, there is a $\beta$-H\"older map $g:[0,1]\rightarrow \cS_{p,q}$, for all $\beta<p$. Thus, the map $h\circ g:[0,1]\rightarrow \cS_{r,s}$ is $\alpha \beta$-H\"older, which by Theorem \ref{thm: main}, and the fact that $\beta$ can be as close to $p$ as needed, implies that
\begin{equation}\label{eq: improve Falc}
    \alpha\leq  \frac{r}{p}.
\end{equation} 

For polynomial spirals studied in \cite{BFF}, the above bound is always an improvement upon Theorem \ref{thm: Falconer}. In particular, for $p=q$ and $r=s$ the bound \eqref{eq: Falc_p<=1} implies that 
$$
\alpha\leq \frac{p+r}{2p}.
$$
In the case where $p\geq r$, the exponent bound \eqref{eq: improve Falc} is indeed an improvement on the above bound stemming from Theorem \ref{thm: Falconer}, i.e., an improvement on \cite[Corollary 2.12]{BFF}. In fact, this is a sharp bound in a more general sense, as stated in Theorem \ref{thm: main2}, which we are ready to prove.
\begin{proof}[Proof of Theorem \ref{thm: main2}]
    Let $0<r\leq p$. If $h:\cS_p\rightarrow\cS_r$ is $\alpha$-H\"older, then is has already been shown that $\alpha\leq r/p$ in \eqref{eq: improve Falc}.

    The desired maps between the spirals $\cS_p, \cS_r$ are in fact appropriate radial stretch maps. In particular, the map $f:\mathbb{C}\rightarrow\mathbb{C}$ defined by 
    $$f(z)=|z|^{\frac{r}{p}-1}z$$ 
    for all $z\neq0$, and $f(0)=0$, is $r/p$-H\"older (see, for instance, \cite[p.~49]{Vais} and \cite[Corollary 3.10.3]{Astala_book}) and satisfies $f(\cS_p)= \cS_r$. Moreover, define the map $g:\mathbb{C}\rightarrow\mathbb{C}$ by  $g(0)=0$ and
    $$
    g(z)=|z|^{\frac{p}{r}-1}z,
    $$for all $z\neq0$. This map satisfies $g(\cS_r)=\cS_p$ and is Lipschitz, due to the derivative being bounded on the closed disk $D(0,(2\pi)^{-r})$. This completes the proof.
\end{proof}

Despite the restriction to polynomial spirals for sharpness, our  bound \eqref{eq: improve Falc} can be shown to be an improvement upon Theorem \ref{thm: Falconer} even for elliptical spirals with specific values of $p\neq q,r\neq s$. In particular, if $0<r<p\leq 1$, then we have the strict inequality
$$
\frac{r}{p}<\frac{p+r}{2p}.
$$ This gap between the two values, along with the continuity of the expression 
$$
\frac{p+q+r+s-pr+qs}{(2+s-r)(p+q)}
$$ in $q$ and $s$, implies that for $q, s$ close enough to $p, r$, respectively, our bound \eqref{eq: improve Falc} is an improvement upon \eqref{eq: Falc_p<=1} for elliptical, non-polynomial spirals as well. For instance, if $p=3/5$, $r=s=1/2$, we have that the exponent bound \eqref{eq: improve Falc} is an improvement upon \eqref{eq: Falc_p<=1} for certain values of $q$ close to $3/5$, i.e.,
$$
\frac{r}{p}=\frac{5}{6}< \frac{13+15q}{12+20q}= \frac{p+q+r+s-pr+qs}{(2+s-r)(p+q)},
$$for all $q\in (3/5,9/5)$. However, it is clear that this is not generally the case, as can be seen by choosing any $q>9/5$ in the above inequality.

It should also be noted that the improved bound \eqref{eq: improve Falc} in the context of H\"older classification for spirals $\cS_p$, $\cS_r$ coincides with the sharp bound in the quasiconformal classification problem resolved by Tyson and the first author in \cite{CG_Tyson}. In particular, by \cite[Theorem 1.1]{CG_Tyson}, two spirals $\cS_p$ and $\cS_r$ are $K$-quasiconformally equivalent if, and only if, $K\geq p/r$ (see \cite{Vais} for more details on quasiconformal mappings). 
It is quite interesting that in the case of these spirals, the sharp exponent bound in the H\"older classification programme suggested by Fraser in \cite{Fra_spirals} is essentially attained from  the sharp dilatation bounds for the corresponding quasiconformal classification study. This motivates further the question of whether and under what conditions resolving the quasiconformal classification problem for two objects results in the resolution of the corresponding H\"older classification problem. We refer the interested reader to the discussion in \cite[Section 5]{CG_Conc} for more details and related results in higher dimensions.

\end{document}